\newtheorem{theorem}{Theorem}
\theoremstyle{plain}
\newtheorem{lemma}[theorem]{Lemma}
\newtheorem{remark}[theorem]{Remark}
\theoremstyle{definition} 
\newtheorem{definition}[theorem]{Definition}
\newtheorem*{tata}{Generalization}
  {\begin{mdframed}[backgroundcolor=lightgray]\begin{tata}}%
  {\end{tata}\end{mdframed}}
\DeclareMathOperator*{\esssup}{ess\,sup}
\DeclareMathOperator*{\essinf}{ess\,inf}
\newcommand{\R}{\mathbb{R}}
\newcommand{\E}{\mathbb{E}}
\newcommand{\F}{\mathcal{F}}
\renewcommand{\P}{\mathbb{P}}
\newcommand{\rp}{\eta}
\newcommand{\Tr}{\operatorname{Tr}}
\begin{document}

\newcommand{\bsdenorm}[1]{\|#1\|_{p,2}} 
\newcommand{\bsdespace}{\mathcal{B}_p}
\newcommand{\bsdenormz}[1]{\|#1\|_{BMO}} 
\newcommand{\bsdespacez}{\mathsf{BMO}}
\newcommand{\drp}{e} 
\newcommand{\dbm}{d} 
\newcommand{\dode}{m} 
\newcommand{\pvar}{{\operatorname{p-var}}}
\newcommand{\qvar}{{\operatorname{q-var}}}
\newcommand{\onevar}{{\operatorname{1-var}}}

\title{Backward stochastic differential equations with Young drift}



\author{Joscha Diehl and Jianfeng Zhang}
\address{Max-Planck Institute for Mathematics in the Sciences, Leipzig.}
\email{diehl@mis.mpg.de}

\address{University of Southern California, Department of Mathematics.}
\email{jianfenz@usc.edu}
\thanks{
  This research was partially supported in part by the DAAD P.R.I.M.E. program
  and NSF grant DMS 1413717.
  Part of this work was carried out while the first author was visiting the University of Souther California,
  and he would like to thank Jin Ma and Jianfeng Zhang for their hospitality.
}
\keywords{Rough paths theory, Young integration, BSDE, rough PDE}

\maketitle

\begin{abstract}
  We prove via a direct fixpoint argument the well-posedness of
  backward stochastic differential equations containing an additional
  drift driven by a path of finite $p$-variation with $p \in [1,2)$.
  An application to the Feynman-Kac representation of semilinear rough partial differential equations is given.
\end{abstract}

\section{Introduction}

Stochastic differential equations (SDEs) driven by Brownian motion $W$ and
an additional deterministic path $\rp$ of low regularity (so called ``mixed SDEs'')
have been well-studied.
In \cite{bib:guerraNualart} the wellposedness of such SDEs is established
if $\rp$ has finite $q$-variation with $q \in [1,2)$.%
\footnote{See Section \ref{sec:youngIntegration} for background on the variation norm and Young integration.}
The integral with respect to the latter is handled via fractional calculus.
Independently, in \cite{bib:diehlPhd} the same problem is studied using
Young integration for the integral with respect to $\rp$.
Interestingly, both approaches need to establish (unique) existence of solutions
via a Yamada-Watanabe theorem. A direct proof using a contraction argument is
not obvious to implement.

For paths of $q$-variation with $q \in (2,3)$ integration has to be dealt with
via the theory of rough paths.
Motivated by a problem in stochastic filtering,
\cite{bib:crisanDiehlFrizOberhauser} give a formal meaning to the mixed SDE by
using a flow decomposition which seperates the stochastic integration from the deterministic
rough path integration. It is not shown that the resulting object actually satisfies
any integral equation.

In \cite{bib:diehlOberhauserRiedel} well-posedness of the corresponding mixed SDE
is established by first constructing a joint rough path ``above'' $W$ and $\rp$.
The determinstic theory of rough paths then allows to solve the mixed SDE.
The main difficulty in that work is the proof of exponential integrability of
the resulting process, which is needed for applications.
In \cite{bib:diehlFrizStannat} these results have been used to study
linear ``rough'' partial differential equations via Feyman-Kac formulae.

Backward stochastic differential equations (BSDEs) were introduced
by Bismut in 1973. In \cite{bib:bismut73} he applied linear BSDEs to stochastic optimal control.
In 1990 Pardoux and Peng \cite{bib:pardouxPeng} then considered non-linear equations.
A solution to a BSDE with driver $f$ and random variable $\xi \in L^2(\F_T)$ is an adapted pair of processes $(Y, Z)$ in suitable spaces, satisfying
\[
  Y_t = \xi + \int_t^T f(r, Y_r, Z_r) ds - \int_t^T Z_r dW_r, \quad t\le T.
\]
Under appropriate conditions on $f$ and $\xi$ they showed the existence of a unique
solution to such an equation.
One important use for BSDEs is their application to semilinear partial differential equations.
This ``nonlinear Feynman-Kac'' formula is for example studied in \cite{bib:pardouxPengPDEs}.

In this work we are interested in showing wellposedness of the following equation
\begin{align}
  \label{eq:introYoungBSDE}
  Y_t
  =
  \xi + \int_t^T f(r,Y_r,Z_r) dr + \int_t^T g(Y_r) d\rp_r - \int_t^T Z_r dW_r.
\end{align}
Here $W$ is a multidimensional Brownian motion,
$\rp$ is a multidimensional (determinstic) path of finite $q$-variation, $q \in [1,2)$
and $\xi$ is a bounded random variable, measurable at time $T$.

Such equations have previously been studied in \cite{bib:diehlFriz}.
In that work $\rp$ is even allowed to be a rough path, i.e. every $q \ge 1$ is feasible.
The drawback of that approach is that no intrinsic meaning is given to the equation,
that is a solution to \eqref{eq:introYoungBSDE} is only defined as the limit of smooth approximations.
In the current work we solve \eqref{eq:introYoungBSDE} directly via a fixed point argument.
The resulting object solves the integral equation, where the
integral with respect to $\rp$ is a pathwise Young integral.

In Section \ref{sec:mainResult} we state and prove our main result.
In Section \ref{sec:applications} we give an application to partial differential equations.
In Section \ref{sec:youngIntegration} we recall the notions of $p$-variation and Young integration.

\section{Main result}
\label{sec:mainResult}

We shall need the following spaces.
\begin{definition}
  For $p>2$ define $\bsdespace$ to be the space of
  adapted process $Y: \Omega \times [0,T] \to \R$ with\footnote{The space $C^{\pvar}([t,T], \R)$ and the norm $||\cdot||_{\pvar;[t,T]}$ are recalled
  in Section \ref{sec:youngIntegration}.}
  \begin{align*}
    \bsdenorm{ Y }
    :=
    \esssup_{t,\omega}\, \E_t\left[ ||Y||_{\pvar;[t,T]}^2 \right]^{1/2}
    +
    \esssup_{\omega} |Y_T| < \infty.
  \end{align*}

  Denote by $\bsdespacez$ the space of all
  progressively measurable $Z: \Omega \times [0,T] \to \R^{\dbm}$ with
  \begin{align*}
    \bsdenormz{ Z }
    :=
    \esssup_{t,\omega}\, \E_t\left[ \int_t^T |Z_r|^2 dr \right] < \infty.
  \end{align*}
\end{definition}

\begin{theorem}
\label{thm-mainResult}
  Let $T > 0$, $\xi \in L^\infty(\F_T)$.
  Let $q \in [1,2)$ and $\rp \in C^{0,\qvar}([0,T], \R^\drp)$.
  %
  Assume $f: \Omega \times [0,T] \times \R \times \R^\dbm \to \R$,
  satisfies for some $C_f > 0$, $\P-\text{a.s.}$,
  \begin{align*}
    \sup_{t \in [0,T]} |f(t,0,0)| &< C_f \\
    |f(t,y,z) - f(t,y',z')| &\le C_f \left( |y-y'| + |z-z'| \right).
  \end{align*}
  Let $g_1, \dots, g_\drp \in C^2_b(\R)$.
  Let $p > 2$ such that $1/p + 1/q > 1$.
  \begin{enumerate}[label=(\roman*)]
    \item There exists a unique
    $Y \in \bsdespace, Z \in \bsdespacez$ such that
    \begin{align}
    \label{BSDE}
      Y_t
      =
      \xi + \int_t^T f(r,Y_r,Z_r) dr + \int_t^T g(Y_r) d\rp_r - \int_t^T Z_r dW_r,
    \end{align}
    where the $d\rp$ integral is a well-defined (pathwise) Young integral.

    \item 
      If, for $i=1,2$,
      $$
      Y^i_t = \xi_i + \int_t^T f_i(s, Y^i_s, Z^i_s) ds + \int_t^T g( Y_s) d\eta_s - \int_t^T Z^i_sdW_s,
      $$
      and $\xi_1 \le \xi_2$, $f_1 \le f_2$.  Then $Y^1 \le Y^2$.

    \item 
      The solution mapping
      \begin{align*}
        L^\infty(\F_T) \times C^\qvar([0,T], \R^\drp) &\to \bsdespace \times \bsdespacez \\
        (\xi,\rp) &\mapsto (Y,Z),
      \end{align*}
      is locally uniformly continuous.

    \item
      Fixing $f, g$ there exists for every $M > 0$
      a $C(M) > 0$ such that for $\xi,\xi' \in \mathcal{F}_T$ with
      $||\xi||_\infty, ||\xi'||_\infty, ||\rp||_{\qvar;[0,T]} < M$
      we have for the corresponding solutions $(Y,Z), (Y',Z')$
      \begin{align*}
        |Y_0 - Y'_0| \le C(M) \E[ |\xi - \xi'|^2 ]^{1/2}.
      \end{align*}
  \end{enumerate}
\end{theorem}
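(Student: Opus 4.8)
The plan is a direct Picard/contraction argument on the product space $\bsdespace \times \bsdespacez$. Given a candidate pair $(U,V) \in \bsdespace \times \bsdespacez$, freeze it in the data and define $(Y,Z) =: \Phi(U,V)$ by
\begin{align*}
  Y_t := \E_t\Big[ \xi + \int_t^T f(r,U_r,V_r)\,dr + \int_t^T g(U_r)\,d\rp_r \Big],
\end{align*}
reading off $Z$ from the martingale representation of $t \mapsto Y_t + \int_0^t f(r,U_r,V_r)\,dr + \int_0^t g(U_r)\,d\rp_r$. The inner $\rp$-integral is a genuine pathwise Young integral: since $U \in \bsdespace$ has $\P$-a.s.\ finite $p$-variation and $g \in C^2_b$, the composition $g(U)$ again has finite $p$-variation, and $1/p+1/q>1$ lets me invoke the Young--Lo\`eve estimate recalled in Section \ref{sec:youngIntegration}. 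The first thing I would record is therefore the pathwise bound
\begin{align*}
  \Big\| \int_\cdot^T g(U_r)\,d\rp_r \Big\|_{\qvar;[t,T]}
  \lesssim \big( \|g\|_\infty + \|g'\|_\infty\,\|U\|_{\pvar;[t,T]} \big)\,\|\rp\|_{\qvar;[t,T]},
\end{align*}
together with the observation that, because $q<p$, this output also has finite $p$-variation.

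\textbf{A priori estimates and contraction.} The crux is to control the two halves of $\bsdenorm{Y}$ and $\bsdenormz{Z}$ against each other. The martingale part is handled by the key estimate that for $p>2$ a continuous local martingale $M$ satisfies $\E_t\big[\|M\|_{\pvar;[t,T]}^2\big] \lesssim \E_t\big[\langle M\rangle_T - \langle M\rangle_t\big]$, which ties the $p$-variation norm of $Y$ to $\bsdenormz{Z}$; the drift $\int f\,dr$ contributes a term controlled by its total variation (hence by $C_f$ and the $\bsdespace$-norm of $(U,V)$), and the Young term is controlled by the bound above. Decomposing $Y_t - Y_s$ via $Y_t = Y_s - \int_s^t f\,dr - \int_s^t g(U)\,d\rp + \int_s^t Z\,dW$ and taking conditional $L^2$-norms of $p$-variations then yields a self-improving bound showing $\Phi$ maps a ball into itself, together with an $L^\infty$-bound on $Y$ coming from $\xi \in L^\infty$ and the boundedness of $f(\cdot,0,0)$ and $g$. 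For the contraction I would estimate $\Phi(U,V) - \Phi(U',V')$ by the same decomposition, the only genuinely new term being $\int_t^T (g(U_r)-g(U'_r))\,d\rp_r$, bounded through the Young--Lo\`eve estimate applied to $g(U)-g(U')$. I expect the main obstacle to sit exactly here: the Young estimate carries a factor $\|\rp\|_{\qvar;[t,T]}$, so a contraction is gained only once this factor is small. I would therefore fix a partition $0 = t_0 < \dots < t_N = T$ fine enough that $\|\rp\|_{\qvar;[t_{i},t_{i+1}]}$ (and the time-Lipschitz contribution of $f$) are small on each block, get a contraction of $\Phi$ on each $[t_i,t_{i+1}]$ with terminal datum the already-constructed solution at $t_{i+1}$, and concatenate; the $L^\infty$-bound guarantees each intermediate terminal value stays in $L^\infty(\F_{t_{i+1}})$.

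\textbf{Comparison.} For (ii) I would linearise. Writing $\delta Y = Y^2 - Y^1$, $\delta Z = Z^2 - Z^1$, Lipschitz continuity of $f_2$ and the $C^1$ regularity of $g$ produce bounded adapted coefficients $a,b,c$ with
\begin{align*}
  \delta Y_t = \delta\xi + \int_t^T \big( a_r\,\delta Y_r + b_r\,\delta Z_r + h_r \big)\,dr + \int_t^T c_r\,\delta Y_r\,d\rp_r - \int_t^T \delta Z_r\,dW_r,
\end{align*}
where $\delta\xi = \xi_2-\xi_1 \ge 0$ and $h_r = f_2(r,Y^1_r,Z^1_r) - f_1(r,Y^1_r,Z^1_r) \ge 0$. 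Here $c_r = \int_0^1 g'(Y^1_r + \theta\,\delta Y_r)\,d\theta$ is bounded and, since $Y^1,\delta Y \in \bsdespace$, has finite $p$-variation, so the pathwise Young exponential $\Gamma_{t} := \exp\big(\int_0^{t} a_r\,dr + \int_0^{t} c_r\,d\rp_r\big)$ is well defined and strictly positive. Removing $b\,\delta Z\,dr$ by Girsanov (legitimate as $b$ is bounded) and applying the integrating factor $\Gamma$ turns $\Gamma\,\delta Y$ into a process with nonnegative terminal value and nonnegative driver, whence $\delta Y_t \ge 0$. The only point specific to the present setting is that the integrating factor carries a Young integral, whose good behaviour is exactly the estimate from the first paragraph applied to $c$.

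\textbf{Stability and the quantitative bound.} Part (iii) is the stability of the fixed point in the data $(\xi,\rp)$: running the estimates of the second paragraph on the difference of two solutions, now with inhomogeneity from $\xi-\xi'$ and from $\int (g(U_r)\,d\rp_r - g(U'_r)\,d\rp'_r)$, gives a Lipschitz bound on bounded sets, and continuity of the Young integral in its driver holds only locally uniformly in $\|\rp\|_{\qvar}$, which is precisely the source of the local (rather than global) uniform continuity asserted. For (iv) the data $f,g,\rp$ are fixed and only $\xi$ varies, so $(\delta Y,\delta Z) := (Y-Y',Z-Z')$ solves the \emph{linear} equation of the third paragraph with $h \equiv 0$; solving it through the measure change $\Q$ that removes $b\,\delta Z$ and the integrating factor $\Gamma$ gives $\delta Y_0 = \E^{\Q}\big[\Gamma_T\,\delta\xi\big]$. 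Since $a,c$ are bounded and $\|\rp\|_{\qvar;[0,T]} \le M$, the Young exponential obeys $\Gamma_T \le C(M)$, while $d\Q/d\P$ has bounded $L^2$-moments because $b$ is bounded (the relevant stochastic exponential is uniformly integrable with a reverse-H\"older bound). Cauchy--Schwarz then yields $|\delta Y_0| \le C(M)\,\E^{\Q}[|\delta\xi|] \le C(M)\,\E[|\delta\xi|^2]^{1/2}$, as claimed. Throughout, the single recurring difficulty is the interface between the pathwise Young estimates and the conditional-$L^2$ martingale estimates; once the martingale $p$-variation inequality and the Young--Lo\`eve bound are in hand, all four parts follow the familiar BSDE pattern.
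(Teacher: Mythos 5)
Your treatment of (i) and (iii) follows the paper's own route: the same map $\Phi$ defined by conditional expectation plus martingale representation, the Young estimate of Theorem \ref{thm:youngIntegration}, the $p$-variation Burkholder--Davis--Gundy inequality, and smallness of $T$ and of $\|\rp\|_{\qvar}$ on subintervals to obtain invariance of a ball and a contraction, followed by concatenation. One point you elide in the globalization: the admissible interval length $\delta$ depends on the size of the terminal datum, so the concatenation only closes if you have an a priori $L^\infty$ bound on $Y$ that is uniform over all of $[0,T]$. The paper obtains this by sandwiching $Y$ between the solutions $\underline Y,\overline Y$ of two deterministic Young ODEs via the comparison principle (which is why comparison is established \emph{before} globalization); your assertion that such a bound ``comes from $\xi\in L^\infty$ and boundedness of $f(\cdot,0,0)$ and $g$'' is plausible but is not an argument.

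The genuine gap is in (iv), and it infects your Girsanov-based proof of (ii) as well. You claim that since $a,c$ are bounded and $\|\rp\|_{\qvar;[0,T]}\le M$, the Young exponential satisfies $\Gamma_T\le C(M)$ deterministically. This is false: the Young integral $\int_0^T c_r\,d\rp_r$ is \emph{not} controlled by $\|c\|_\infty$; the Young estimate requires $\|c\|_{\pvar;[0,T]}$, and with $c_r=\int_0^1 g'(\theta Y_r+(1-\theta)Y'_r)\,d\theta$ this $p$-variation is of order $\|Y\|_{\pvar}+\|Y'\|_{\pvar}$, a random variable which for a solution of \eqref{BSDE} is only square-integrable conditionally (it contains the $p$-variation of $\int Z\,dW$), not essentially bounded. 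Hence $\exp(\int_0^T c_r\,d\rp_r)$ is an unbounded random variable, and establishing its integrability is the entire difficulty of (iv). The paper's proof spends most of its effort exactly here: it sets $\Gamma_t=\int_t^T\alpha_r\,d\rp_r$, uses the product rule $(\Gamma_t)^{m+1}=(m+1)\int_t^T\Gamma_r^m\alpha_r\,d\rp_r$ together with the conditional bound $\E_t[\|\alpha\|_{\pvar;[t,T]}]\le C_2(M)$ to iterate $\E_t[\|\Gamma^m\|_{\qvar;[t,T]}]\le m!\,C_3(M)^m$, and only thereby obtains exponential moments of $\Gamma$ --- which are also needed to verify that $\int\exp(\int_0^t\alpha_r\,d\rp_r)\,\Delta Z_t\,dW_t$ is a true martingale. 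Without this step your Cauchy--Schwarz conclusion is unsupported. For (ii) the paper sidesteps the issue entirely: it approximates $\rp$ by smooth paths, applies the classical comparison theorem to the approximating BSDEs, and passes to the limit using the continuity statement; if you wish to keep your linearisation route for (ii), you must supply the exponential-moment estimate there as well.
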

\begin{remark}
  The refined continuity statement in (iv) will be imporant for our application to rough PDEs in Section \ref{sec:applications}.
\end{remark}
%
%
\begin{proof}
For $R>0$ define
\begin{align*}
  B(R) := \{ (Y,Z) : \bsdenorm{Y} < R, ||Z||_{\bsdespacez} < R \}.
\end{align*}

For $Y \in \bsdespace, Z \in \bsdespacez$ define $\Phi(Y,Z) := (\tilde Y, \tilde Z)$,
where
\begin{align*}
  \tilde Y_t
  =
  \xi + \int_t^T f(r,Y_r,Z_r) dr + \int_t^T g(Y_r) d\rp_r - \int_t^T \tilde Z_r dW_r.
\end{align*}
This is well-defined as usual in the BSDE literature
(see for example \cite{bib:pardouxPeng}),
by
setting
\begin{align*}
  \tilde Y_t := \E[ \xi + \int_t^T f(r,Y_r,Z_r) dr + \int_t^T g(Y_r) d\rp_r | \F_t ],
\end{align*}
and letting $\tilde Z$ be the integrand in the It\^o representation of the martingale
\begin{align*}
 \tilde Y_t + \int_0^t f(r,Y_r,Z_r) dr + \int_0^t g(Y_r) d\rp_r
\end{align*}
In what follows $A \lesssim B$ means there exists a constant $C > 0$
that is independent of $\rp, \xi$ such that $A \le C B$.
The constant is bounded for $||g||_{C^2_b}, C_f$ bounded.

\textbf{Unique existence on small interval}\\
We first show that for $T$ small enough, $\Phi$ leaves a ball invariant,
i.e.  for $T$ small enough, $R$ large enough
\begin{align*}
  \Phi( B(R) ) \subset B(R).
\end{align*}
  Let $(\tilde Y, \tilde Z) = \Phi(Y, Z)$, then
  \begin{align}
    \label{eq:intF}
    ||\int_t^\cdot f(r,Y_r,Z_r) dr||_{\pvar;[t,T]}
    &\le
    ||\int_t^\cdot f(r,Y_r,Z_r) dr||_{\onevar;[t,T]} \notag \\
    &\le
    \int_t^T |f(r,Y_r,Z_r)| dr \notag \\
    &\lesssim
    \int_t^T |f(r,0,0)| dr
    +
    T ||Y||_{\infty;[t,T]}
    +
    \int_t^T |Z_r| dr \notag \\
    &\lesssim
    T 
    +
    T ||Y||_{\pvar;[t,T]}
    +
    T |Y_T|
    +
    \int_t^T |Z_r| dr.
    %
  \end{align}
  Using the Young estimate (Theorem \ref{thm:youngIntegration} in the Appendix) we estimate
  \begin{align}
    \label{eq:intG}
    ||\int_t^\cdot g(Y_r) d\rp_r ||_{\pvar;[t,T]}
    &\le
    ||\int_t^\cdot g(Y_r) d\rp_r ||_{\qvar;[t,T]} \notag \\
    &\lesssim
    \left( 1 + ||Y||_{\pvar;[t,T]} \right) ||\rp||_{\qvar;[t,T]}.
  \end{align}

  The Burkholder-Davis-Gundy inequality for $p$-variation (\cite[Theorem 14.12]{bib:frizVictoir}) gives
  \begin{align}
    \label{eq:BDG}
    \E_t[ ||\int_t^\cdot \tilde Z_r dW_r||_{\pvar;[t,T]}^2 ] \lesssim \E_t[ \int_t^T |\tilde Z_r|^2 dr ].
  \end{align}

  Now the $d\rp$ integral satisfies the usual product rule,
  so together with It\=o's formula we get
  \begin{align*}
    \tilde Y_t^2 =
    \xi^2
    +
    2 \int_t^T f(r,Y_r,Z_r) \tilde Y_r dr
    +
    2 \int_t^T g(Y_r) \tilde Y_r d\rp_r
    -
    \int_t^T 2 \tilde Y_r \tilde Z_s dW_r
    -
    \int_t^T |\tilde Z_r|^2 dr.
  \end{align*}
  By Lemma \ref{lem:pvarProduct} (again in Appendix below) 
  \begin{align*}
    ||g(Y) \tilde Y||_{\pvar;[t,T]}
    &\le
    ||g||_\infty
    ||\tilde Y||_{\pvar;[t,T]}
    +
    ||g(Y)||_{\pvar;[t,T]}
    ||\tilde Y||_{\infty;[t,T]} \\
    &\le
    ||g||_\infty
    ||\tilde Y||_{\pvar;[t,T]}
    +
    ||Dg||_\infty ||Y||_{\pvar;[t,T]}
    \left(
      ||\tilde Y||_{\pvar;[t,T]}
      +
      |Y_T|
    \right)\\
    &\lesssim
    ||\tilde Y||_{\pvar;[t,T]}
    +
    ||\tilde Y||_{\pvar;[t,T]}^2
    +
    R^2.
  \end{align*}
  Taking conditional expectation we get
  \begin{align}
    \label{eq:taking}
    \E_t\left[ \tilde Y_t^2 \right]
    +
    \E_t\left[ \int_t^T |\tilde Z_s|^2 ds \right]
    &\lesssim
    \E_t[ \xi^2 ]
    +
    \E_t\left[ \int_t^T \left( |f(r,0,0)| + |Y_r| + |Z_r| \right) |\tilde Y_r| dr \right] \notag \\
    &\qquad
    +
    ||\rp||_\qvar \left( 1 + \E_t\left[ ||\tilde Y||_\pvar + ||\tilde Y||_\pvar^2 \right] + R^2 \right).
    %
  \end{align}

  Now
  \begin{align*}
    &\E_t\left[ \int_t^T \left( |f(r,0,0)| + |Y_r| + |Z_r| \right) |\tilde Y_r| dr \right] \\
    &\lesssim
    \E_t\left[ \int_t^T |f(r,0,0)|^2 + |Y_r|^2 +  |Z_r|^2 + |\tilde Y_r|^2 dr \right] \\
    &\lesssim
    \E_t\left[ \int_t^T |f(r,0,0)|^2  dr \right]
    +
    T \E_t\left[ ||Y||_\infty^2 \right]
    +
    \E_t\left[ \int_t^T |Z_r|^2 dr \right]
    +
    T \E_t\left[ ||\tilde Y||_\infty^2 \right] \\
    &\lesssim
    1
    +
    T \E_t\left[ ||Y||_\pvar^2 + |Y_T|^2 \right]
    +
    R
    +
    T \E_t\left[ ||\tilde Y||_\pvar^2 + |\tilde Y_T|^2 \right] \\
    &\lesssim
    1
    +
    T R^2
    +
    T\E_t\left[ \xi^2 \right]
    +
    R
    +
    T \E_t\left[ ||\tilde Y||_\pvar^2 \right]
    +
    T \E_t\left[ \xi^2 \right].
  \end{align*}
  We estimate trivially
  \begin{align*}
    \E_t\left[ ||\tilde Y||_{\pvar;[t,T]}^2 \right]
    &\lesssim
    \E_t\left[ ||\int_t^\cdot f(r,Y_r,Z_r) dr||_{\pvar;[t,T]}^2 \right]
    +
    \E_t\left[ ||\int_t^\cdot g(Y_r) d\rp_r ||_{\pvar;[t,T]}^2 \right] \\
    &\quad
    +
    \E_t\left[ ||\int_t^\cdot \tilde Z_r dW_r||_{\pvar;[t,T]}^2 \right],
  \end{align*}
  which we can bound, using \eqref{eq:intF}, \eqref{eq:intG} and \eqref{eq:BDG}, by a constant times
  \begin{align*}
    &T^2
    +
    T^2 \E_t\left[ ||Y||_{\pvar;[t,T]}^2 \right]
    +
    T^2 \E_t\left[ \xi^2 \right]
    +
    \left( 1 + T \right) \E\left[ \int_t^T |\tilde Z_r|^2 dr \right] \\
    &\qquad
    +
    \left( 1 +
    \E_t\left[ ||Y||_{\pvar;[t,T]}^2 \right]
    \right) ||\rp||_{\qvar;[t,T]}^2 \\
    &\lesssim
    T^2
    +
    T^2 R^2
    +
    T^2 \E_t\left[ \xi^2 \right]
    +
    \left( 1 + T \right) \E\left[ \int_t^T |\tilde Z_r|^2 dr \right] \\
    &\qquad
    +
    \left( 1 +
    R^2
    \right) ||\rp||_{\qvar;[t,T]}^2.
  \end{align*}
  Combining with \eqref{eq:taking}, we get
  \begin{align*}
    &\E_t\left[ \tilde Y_t^2 \right]
    +
    \E_t\left[ \int_t^T |\tilde Z_s|^2 ds \right] \\
    &\lesssim
    \left( 1 + T+ T^2 \right) \E_t[ \xi^2 ]
    +
    1
    +
    T R^2
    +
    R
    +
    T \E_t\left[ ||\tilde Y||_\pvar^2 \right] \\
    &\quad
    +
    ||\rp||_\qvar \left( 1 + \E_t\left[ ||\tilde Y||_\pvar + ||\tilde Y||_\pvar^2 \right] + R + R^2 \right) \\
    &\lesssim
    \left( 1 + T+ T^2 \right) \E_t[ \xi^2 ]
    +
    1
    +
    T R^2
    +
    R \\
    &\quad
    +
    T
    \left\{
      T^2
      +
      T^2 R^2
      +
      T^2 \E_t\left[ \xi^2 \right]
      +
      \left( 1 + T \right) \E\left[ \int_t^T |\tilde Z_r|^2 dr \right]
      +
      \left( 1 + R^2 \right) ||\rp||_{\qvar;[t,T]}^2
    \right\} \\
    &\quad
    +
    ||\rp||_\qvar \\
    &\quad
      \times \Biggl( 1
              + 
    \Bigl\{
    T
    +
    T R
    +
    T \E_t\left[ \xi^2 \right]^{1/2}
    +
    T^{1/2} \E\left[ \int_t^T |\tilde Z_r|^2 dr \right]^{1/2}
    +
    \left( 1 + R \right) ||\rp||_{\qvar;[t,T]} \\
    &\qquad
    +
      T^2
      +
      T^2 R^2
      +
      T^2 \E_t\left[ \xi^2 \right]
      +
      T \E\left[ \int_t^T |\tilde Z_r|^2 dr \right]
      +
      \left( 1 + R^2 \right) ||\rp||_{\qvar;[t,T]}^2
    \Bigr\}
    + R + R^2 \Biggr)
  \end{align*}
  Using $|a| \le 1 + |a|^2$ and picking $T > 0$ such that $T + T^2 \le 1/2$ we get,
  \begin{align*}
    \E_t\left[ \int_t^T |\tilde Z_s|^2 ds \right]
    \le
    c \left( 1 + F(T) \left( R + R^2 \right) \right),
  \end{align*}
  with $F(T) \to 0$, as $T \to 0$ (here we use that $||\rp||_{\qvar;[0,T]} \to 0$ for $T \to 0$).

  Then
  \begin{align*}
    \E_t\left[ ||\tilde Y||_{\pvar;[t,T]}^2 \right]^{1/2}
    \lesssim
    T
    +
    T R
    +
    T \E_t\left[ \xi^2 \right]^{1/2}
    +
    T^{1/2} \left( 1 + F(T) \left( R + R^2 \right) \right)
    +
    \left( 1 + R \right) ||\rp||_{\qvar;[t,T]},
  \end{align*}
  which can be made smaller than $R/2$ by picking first $R$ large and then $T$ small.
  So indeed the ball stays invariant.

  We now show that for $T$ small enough, $\Phi$ is a contraction on $B(R)$.
  So let $(Y,Z), (Y',Z') \in B(R)$ be given.
  Note that, since $Y_T = Y'_T$,
  \begin{align*}
    |Y_t - Y'_t|
    &=
    \E_t[ |Y_t - Y'_t| ] \\
    &\le
    \E_t[ ||Y - Y'||_{\pvar;[t,T]} ] \\
    &\le
    \E_t[ ||Y - Y'||_{\pvar;[t,T]}^2 ]^{1/2}.
  \end{align*}
  So
  \begin{align*}
    \esssup_{\omega} ||Y(\omega) - Y'(\omega)||_\infty
    \le
    \bsdenorm{ Y - Y' }.
  \end{align*}
  Let $(\tilde Y, \tilde Z) = \Phi(Y,Z), (\tilde Y', \tilde Z') = \Phi(Y',Z')$.
  Using the Young estimate (Theorem \ref{thm:youngIntegration}) and Lemma \ref{lem:differencePvar} (in Appendix below) we have
  for some constant $c$ that can change from line to line
  \begin{align*}
    ||\tilde Y - \tilde Y'||_{\pvar;[t,T]}
    &\le
    c T ||Y-Y'||_{\pvar;[t,T]}
    +
    c \int_t^T |Z_r-Z_r'| dr
    +
    ||Y - Y'||_{\pvar;[t,T]} ||\rp||_\qvar \\
    &\qquad
    +
    c \left( 1 + ||Y||_{\pvar;[t,T]} \right)
    || Y -  Y'||_\infty ||\rp||_\qvar
    +
    ||M - M'||_{\pvar;[t,T]},
  \end{align*}
  where $M = \int \tilde Z dW, M' = \int \tilde Z' dW$.
  Hence
  \begin{align*}
    \E_t[ ||\tilde Y - \tilde Y'||_{\pvar;[t,T]}^2 ]^{1/2}
    &\le
    c \E_t[ ||Y - Y'||_{\pvar;[t,T]} ]^{1/2} \left( T + ||\rp||_\qvar \right)
    +
    c T^{1/2} \E_t[ \int_t^T |Z_r-Z_r'|^2 dr ]^{1/2} \\
    &\quad
    +
    c \left( \E_t[ || Y||_{\pvar;[t,T]}^2 ]^{1/2} \right)
    \sup_{\omega} ||Y(\omega) -  Y'(\omega)||_\infty ||\rp||_\qvar \\
    &\qquad
    +
    c \E_t[ ||M - M'||_{\pvar;[t,T]}^2 ]^{1/2} \\
    &\le
    c T^{1/2} \E_t[ \int_t^T |Z_r-Z_r'|^2 dr ]^{1/2}
    +
    c \left( T + ||\rp||_\qvar \right) \bsdenorm{ Y - Y' } \\
    &\quad
    +
    c \left( 1 + R \right) ||\rp||_\qvar \bsdenorm{ Y - Y' }
    + 
    \E_t[ \int_t^T (\tilde Z_s - \tilde Z'_s)^2 ds ]^{1/2}.
  \end{align*}

  So for $T$ small enough
  \begin{align*}
    \bsdenorm{ \tilde Y - \tilde Y' }
    &\le
    \frac{1}{4} \left[ \bsdenorm{ Y - Y' } + \bsdenormz{ Z - Z' } \right]
    +
    \bsdenormz{ \tilde Z - \tilde Z' }.
  \end{align*}

  On the other hand
  \begin{align*}
    (\tilde Y_t - \tilde Y'_t)^2
    &=
    2 \int_t^T \left[ (f(Y_s,Z_s) - f(Y'_s,Z'_s)) (Y_s - Y'_s) \right] ds
    +
    2 \int_t^T \left[ (g(Y_s) - g(Y'_s)) (\tilde Y_s - \tilde Y'_s) \right] d\rp_s \\
    &\quad
    -
    2 \int_t^T \left[ (\tilde Y_s - Y'_s) (Z_s - Z'_s) \right] dB_s
    -
    \int_t^T |\tilde Z_s - \tilde Z_s'|^2 ds.
  \end{align*}

  Note that
  \begin{align*}
    &|| \left( g(Y) - g(Y') \right) \left( Y - Y' \right) ||_{\pvar;[t,T]} \\
    &\quad\lesssim
    ||g(Y) - g(Y')||_\infty ||Y-Y'||_{\pvar;[t,T]}
    +
    ||g(Y) - g(Y')||_{\pvar;[t,T]} ||Y-Y'||_\infty \\
    &\quad\lesssim
    ||Y-Y'||_{\pvar;[t,T]}^2
    +
    ( 1 + ||Y||_{\pvar;[t,T]} ) ||Y - Y'||_{\pvar;[t,T]}^2 \\
    &\quad\lesssim
    ( 1 + R ) ||Y-Y'||_{\pvar;[t,T]}^2.
  \end{align*}
  Hence the Young integral is bounded by a constant times $||\rp||_{\qvar;[t,T]} (1 + R) ||Y-Y'||_{\pvar;[t,T]}^2$.
  %
  We estimate the Lebesgue integral as
  \begin{align*}
    |\int_t^T \left[ (f(Y_s,Z_s) - f(Y'_s,Z'_s)) (Y_s - Y'_s) \right] ds|
    &\lesssim
    \int_t^T \left( |Y_s - Y'_s| + |Z_s - Z_s'| \right) |Y_s - Y'_s| ds \\
    &\lesssim
    T \left( 1 + \frac{1}{\lambda} \right) ||Y-Y||_\infty
    +
    \lambda \int_t^T |Z_s - Z_s'|^2 ds.
  \end{align*}

  So, after taking conditional expectation,
  \begin{align*}
    \E_t[ \int_t^T |\tilde Z_s - \tilde Z_s'|^2 ds ]^{1/2}
    &\lesssim
    T^{1/2} \left( 1 + \frac{1}{\lambda} \right)^{1/2} \E_t[ ||Y-Y'||_{{\pvar;[t,T]}}^2 ]^{1/2}
    +
    \lambda \E_t[ \int_t^T |Z_r - Z_r'|^2 dr ]^{1/2} \\
    &\qquad
    +
    ||\rp||_\qvar^{1/2} (1 + R)^{1/2} \E_t[ ||\tilde Y - \tilde Y'||_{\pvar;[t,T]}^2 ]^{1/2}.
  \end{align*}
  That is
  \begin{align*}
    \bsdenormz{\tilde Z - \tilde Z'}
    &\le
    T^{1/2} \left( 1 + \frac{1}{\lambda} \right)^{1/2} \bsdenorm{Y - Y'}
    +
    \lambda \bsdenormz{Z-Z'} \\
    &\qquad
    +
    ||\rp||_q (1 + R)^{1/2} \bsdenorm{Y - Y'}
  \end{align*}

  Picking $\lambda$ small, then $T$ small, we get
  \begin{align*}
    \bsdenormz{\tilde Z - \tilde Z'}
    \le
    \frac{1}{4} \bsdenorm{ Y - Y' }
    +
    \frac{1}{4} \bsdenormz{ Z - Z' }
  \end{align*}
  
  Define the modified norm
  \newcommand{\bsdenormjoint}[1]{\|\|#1\|\|}
  \begin{align*}
    \bsdenormjoint{ Y, Z }
    :=
    \bsdenorm{ Y }
    +
    2 \bsdenormz{ Z }.
  \end{align*}

  Then
  \begin{align*}
    &\bsdenormjoint{ \tilde Y - \tilde Y', \tilde Z - \tilde Z' } \\
    &\quad\le
    \frac{1}{4} \left[ \bsdenorm{ Y - Y' } + \bsdenormz{ Z - Z' } \right]
    +
    \bsdenormz{ \tilde Z - \tilde Z' }
    +
    \frac{1}{2} \bsdenorm{Y - Y'}
    +
    \frac{1}{2} \bsdenorm{Z - Z'} \\
    &\quad=
    \frac{3}{4} \bsdenorm{Y - Y'}
    +
    \frac{7}{4} \bsdenormz{Z - Z'} \\
    &\le
    \frac{7}{8} \bsdenormjoint{ Y - Y', Z - Z' }.
  \end{align*}
  We hence have a contraction
  and thereby existence of a unique solution on small enough time intervals.

  \textbf{Continuity on small time interval}\\
  This follows from virtually the same argument as the contraction mapping argument.

  \textbf{Comparison on small time interval}\\
  Let $C_B > 0$ be given,
  and pick $T = T(C_B)$ so small that the BSDE is well-posed
  for any $f,g$ with $||g||_{C_b^2}, C_f < C_B$ and
  any
  $\rp \in C^\qvar, \xi \in \F_T$ 
  with $||\rp||_{\qvar;[0,T]}, ||\xi||_\infty < C_B$.
  
  Let $\xi_1, \xi_2 \in \F_T$ be given with $||\xi_1||_\infty < C_B$
  and
  $\rp \in C^\qvar$  with
  $||\rp||_{\qvar;[0,T]} < C_B$.
  Let $\rp^n$ be a sequence of smooth paths approximating $\rp$ in $q$-variation norm, with $||\rp^n||_{\qvar;[0,T]} < C_B$ for all $n \ge 1$.

  Denote $Y_1^n$ resp $Y_2^n$ be the classical BSDE solution with driving path $\rp^n$
  and data $(\xi_1, f_1, g)$ resp. $(\xi_2, f_2, g)$.
  Then by standard comparison theorem
  (for example, see \cite{bib:ElKarouiPengQuenz})
  \begin{align*}
    Y_1^n \le Y_2^n.
  \end{align*}
  By continuity 
  we know that
  \begin{align*}
    \bsdenorm{Y_1^n - Y_1}
    +
    \bsdenorm{Y_2^n - Y_2} \to 0.
  \end{align*}
  In particular, almost surely,
  \begin{align*}
    ||Y_1^n - Y_1||_\infty
    +
    ||Y_2^n - Y_2||_\infty
    \to 0.
  \end{align*}
  Hence $Y_1 \le Y_2$.

  \textbf{Unique existence on arbitrary time interval}\\
  We show existence for arbitrary $T > 0$. Denote
  $$
  \overline \xi := \esssup_\omega \xi, ~~ \underline \xi := \essinf_\omega \xi,~~ \overline f := \esssup_\omega f,~~ \underline f := \essinf_\omega f.
  $$
  By assumption
  $$
    |\overline \xi|+ |\underline \xi| + \int_0^T [|\overline f|^2+|\underline f|^2](t,0,0) dt<\infty.
  $$ 
  Consider the following Young ODEs:
  \begin{align*}
  \overline Y_t = \overline\xi + \int_t^T \overline f(s, \overline Y_s, 0) + \int_t^T g( \overline Y_s) d\eta_s;\\
  \underline Y_t = \underline\xi + \int_t^T \underline f(s, \overline Y_s, 0) + \int_t^T g( \underline Y_s) d\eta_s.
  \end{align*}
  Note that $(\overline Y, 0)$ and $(\underline Y, 0)$ solve the following BSDEs respectively:
   \begin{align*}
  \overline Y_t = \overline\xi + \int_t^T \overline f(s, \overline Y_s, \overline Z_s) + \int_t^T g( \overline Y_s) d\eta_s - \int_t^T \overline Z_s dW_s;\\
  \underline Y_t = \underline\xi + \int_t^T \underline f(s, \underline Y_s, \underline Z_s) + \int_t^T g( \underline Y_s) d\eta_s -  \int_t^T \underline Z_s dW_s.
  \end{align*}

  Choose $\delta$ such that the BSDE (\ref{BSDE}) is
  wellposed on a time interval of length $\delta$ whenever the terminal condition is bounded by $\|\overline Y\|_\infty\vee \|\underline Y\|_\infty$.
  Let $\pi$: $0=t_0<\cdots<t_n=T$ be a partition
  such that $t_{i+1}-t_i\le \delta$ for all $i$.  First, by the preceeding arguments,
  BSDE (\ref{BSDE}) on $[t_{n-1}, t_n]$ with terminal
  condition $\xi$ is wellposed and we denote the solution by $(Y^n, Z^n)$. By
  comparison
  we have $\underline Y_{t_{n-1}} \le Y^n_{t_{n-1}} \le \overline Y_{t_{n-1}}$.
  We can hence start again the BSDE from $Y_{t_{n-1}}$ at time $t_{n-1}$
  and solve back to time $t_{n-2}$.
  Repeating the arguments backwardly we obtain the existence of a (unique)
  solution on $[0, T]$.

  \textbf{Continuity}\\
  Using the previous step we can use the continuity result on small intervals
  to get continuity of the solution map on arbitrary intervals.

  We finish by showing the second continuity statement.
  Since the $d\rp$-term is more difficult then the $dt$-term we will assume $f\equiv 0$
  for ease of presentation.
  %
  First note that since the $||\xi||_\infty, ||\xi'||_\infty < M$,
  the local uniform continuity of the solution map in Theorem \ref{thm-mainResult} we get
  \begin{align*}
    \bsdenorm{Y^n} \le C_0(M).
  \end{align*}
  Let 
  \begin{align*}
    \alpha_r := \int_0^1 \partial_y g( \theta Y_r + (1-\theta) Y'_r ) d\theta.
  \end{align*}

  Note that 
  \begin{align*}
    ||\alpha||_{\pvar;[t,T]}
    \le
    C_1(M)
    \Big(
    ||Y||_{\pvar;[t,T]}
    +
    ||Y'||_{\pvar;[t,T]}
    \Big)
  \end{align*}
  So that
  \begin{align*}
    \E_t[ ||\alpha||_{\pvar;[t,T]} ]
    \le
    C_2(M),
  \end{align*}
  for some constant $C_2(M)$.
  Let $\Delta Y := Y - Y'$.
  Then (almost surely)
  \begin{align*}
    ||\Delta Y||_{\infty;[t,T]}
    &\le
    ||Y||_{\infty;[t,T]}
    +
    ||Y'||_{\infty;[t,T]} \\
    &\le
    \bsdenorm{Y}
    +
    \bsdenorm{Y'} \\
    &\le
    2 C_0.
  \end{align*}

  Now
  \begin{align*}
    d \Delta Y_t
    =
    -
    \alpha_t \Delta Y_t d\rp_t
    +
    \Delta Z_t dW_t,
  \end{align*}

  By Ito's formula, together with the classical product rule for the $d\rp$-term, we get
  \begin{align*}
    d[ \exp( \int_0^t \alpha_r  d\rp_r ) \Delta Y_t ]
    =
    \exp( \int_0^t \alpha_r d\rp_r ) \Delta Z_t dW_t,
  \end{align*}
  so that if the latter is an honest martingale we get
  \begin{align*}
    |\Delta Y_0|
    =
    |\E[ \exp( \int_0^T \alpha_r d\rp_r ) \Delta Y_T ]|
    \le
    \E[ \exp( 2 \int_0^T \alpha_r d\rp_r ) ]^{1/2} \E[ (\Delta Y_T)^2 ]^{1/2}.
  \end{align*}

  Let us calculate the conditional moments of $\Gamma_t := \int_t^T \alpha_r d\rp_r$.
  First
  \begin{align*}
    \E_t\left[ ||\Gamma||_{\qvar;[t,T]} \right]
    &\le
    c_{Young} ||\rp||_{\qvar;[t,T]} \E_t\left[ ||\alpha||_{\pvar;[t,T} \right] \\
    &\le
    c_{Young} ||\rp||_{q;[t,T]}
    C_2.
  \end{align*}

  Further, by the product rule,
  \begin{align*}
    (\Gamma_t)^{m+1} = (m+1) \int_t^T \Gamma^m_r \alpha_r d\rp_r,
  \end{align*}
  so that
  \begin{align*}
    &\E_t[ || (\Gamma)^{m+1} ||_{\qvar;[t,T]} ] \\
    &\le
    c_{Young}
    (m+1)
    ||\rp||_{\qvar;[t,T]}
    \E_t\Big[
      ||\Gamma^m||_{\pvar;[t,T]} ||\alpha||_{\infty;[t,T]}
      +
      ||\Gamma^m||_{\infty;[t,T]} ||\alpha||_{\pvar;[t,T]}
      \Big] \\
    &\le
    c_{Young}
    (m+1)
    ||\rp||_{\qvar;[t,T]}
    \Big(
      \E_t[ ||\Gamma^m||_{\pvar;[t,T]} ] ||g'||_\infty
      +
      \sup_{s \in [t,T]} \E_s[ ||\Gamma^m||_{\pvar;[s,T]} ]
      \E_t[ ||\alpha||_{\pvar;[t,T]} ]
    \Big) \\
    &\le
    c_{Young}
    (m+1)
    ||\rp||_{\qvar;[t,T]}
    \Big(
      \E_t[ ||\Gamma^m||_{\pvar;[t,T]} ] ||g'||_\infty
      +
      \sup_{s \in [t,T]} \E_s[ ||\Gamma^m||_{\pvar;[s,T]} ]
      C_2
    \Big)
  \end{align*}

  Iterating, we get that for some $C_3(M) > 0$
  \begin{align*}
    \sup_{t\le T} \E_t[ ||(\Gamma)^m||_{\qvar;[t,T]} ]
    \le
    m! C_3(M)^m.
  \end{align*}
  
  In particular, for every $t\le T$
  \begin{align*}
    \E[ (\Gamma_t)^m ]
    \le
    m! C_3(M)^m.
  \end{align*}
  So there is $\varepsilon > 0$ such that
  \begin{align*}
    \E[ \exp( \varepsilon |\int_t^T \alpha_r \Delta Y_r d\rp_r|^2 ) ] < C_3(M)
  \end{align*}
  In particular for every $c \in \R$
  \begin{align*}
    \E[ \exp( c |\int_t^T \alpha_r \Delta Y_r d\rp_r| ) ] < C_4(c,M).
  \end{align*}

  So the statement follows with $C(M) = C_4(2,M)$ if
  \begin{align*}
    \int \exp( \int_0^t \alpha_r d\rp_r ) \Delta Z_t dW_t,
  \end{align*}
  is an honest martingale.
  But this follows from
  \begin{align*}
   \E\left[ 
     \left(
      \int_0^T \exp( 2 \int_0^t \alpha_r d\rp_r ) |\Delta Z_t|^2 dt
    \right)^{1/2} \right]
    &\le
   \E\left[
     \left(
     \sup_{t \le T} \exp( 2 \int_0^t \alpha_r d\rp_r ) 
      \int_0^T |\Delta Z_t|^2 dt
    \right)^{1/2} \right] \\
    &\le
    \E\left[ \sup_{t \le T} \exp( 4 \int_0^t \alpha_r d\rp_r ) \right]^{1/2}
    \E\left[ \int_0^T |\Delta Z_t|^2 dt \right] \\
    &< \infty.
  \end{align*}

  Here we used
  \begin{align*}
    \E\left[ \exp( \int_0^t \alpha_r \Delta Y_r d\rp_r ) \right]
    &=
    \E\left[
      \exp( \int_0^T \alpha_r \Delta Y_r d\rp_r )
      \exp( -\int_t^T \alpha_r \Delta Y_r d\rp_r )
    \right] \\
    &\le
    \E\left[ \exp( 2 |\int_0^T \alpha_r \Delta Y_r d\rp_r| ) \right]^{1/2}
    \E\left[
      \exp( 2 |\int_t^T \alpha_r \Delta Y_r d\rp_r| )
    \right]^{1/2} \\
    &< \infty.
  \end{align*}
\end{proof}


\section{Application to rough PDEs}
\label{sec:applications}

\newcommand{\BUC}{\operatorname{BUC}}
\newcommand{\BC}{\operatorname{BC}}
It is well-known that BSDEs provide a stochastic representation
for solutions to semi-linear parabolic partial differential equations (PDEs),
in what is sometimes called the ``nonlinear Feynman-Kac formula''
\cite{bib:pardouxPengPDEs}.
In this section we show how to use BSDEs with Young drift for
the stochastic representation for PDEs of the form
\begin{align*}
  \partial_t u = \frac{1}{2} \Tr[ \sigma(x) \sigma^T(x) D^2 u^n ] + b(x) \cdot Du + f(t,u,\sigma(x)^T Du) + g(u) \dot{\rp}_t.
\end{align*}
Here $\rp$ has finite $q$-variation, with $q \in [1,2)$
and the last term is hence not well-defined.
There are several approaches to make sense of such a ``rough'' PDE.
Here we shall define the solution as the limit of solutions to smooth approximations,
see Theorem \ref{thm:roughPDE} below.

For $D = [0,T] \times \R^\dode$ or $D = \R^\dode$
we shall need the space $\BC(D)$ (resp. $\BUC(D)$) of bounded continous (resp. uniformly continous functions) on $D$.

Let us recall the nonlinear Feyman-Kac formula for standard PDEs.
\begin{theorem}[{\cite[Section 4]{bib:pardouxPengPDEs}}]
  \label{thm:smoothPDE}
  Let $h \in \BUC(\R^\dode)$,
  $f(t,y,z): [0,T] \times \R \times \R^\dode \to \R$
  bounded and Lipschitz in $y,z$ uniformly in $t,x$,
  $\sigma: \R^\dode \to L(\R^\dbm, \R^\dode)$ Lipschitz,
  $b: \R^\dode \to \R^\dode$ Lipschitz
  and $g_1, \dots, g_\drp \in C^2_b(\R)$
  and let $\rp$ be a smooth path.
  For every $s \in [0,T], x \in \R^\dode$
  let $X^{s,x}$ be the solution to the SDE
  \begin{align*}
    dX^{s,x}_t = \sigma(X^{s,x}_t) dW_t + b(X^{s,x}_t) dt \qquad X^{s,x}_s = x
  \end{align*}
  and $Y^{s,x}$ the solution to the BSDE
  \begin{align*}
    dY^{s,x}_t = f(t, Y^{s,x}_t, Z^{s,x}_t) dt + g(Y^{s,x}_t) d\rp_t - Z^{s,x}_t dW_t \qquad Y^{s,x}_T = h(X^{s,x}_T).
  \end{align*}

  Then $u(t,x) := Y^{t,x}_t$ is the unique viscosity solution in $\BUC([0,T]\times\R^\dode)$ to the PDE
  \begin{align*}
    \partial_t u &= \frac{1}{2} \Tr[ \sigma(x) \sigma^T(x) D^2 u^n ] + b(x) \cdot Du + f(t,u,\sigma(x)^T Du) + g(u) \dot{\rp}_t \\
            u|_T &= h.
  \end{align*}
\end{theorem}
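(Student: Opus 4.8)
The plan is to reduce the statement to the classical nonlinear Feynman–Kac formula of Pardoux–Peng by absorbing the (here smooth) driving term $g(Y)\,d\rp$ into the BSDE driver. Since $\rp$ is a smooth path on the compact interval $[0,T]$, its derivative $\dot{\rp}_t = (\dot{\rp}^1_t,\dots,\dot{\rp}^\drp_t)$ is a bounded continuous function, so I would set
\[
  \hat f(t,y,z) := f(t,y,z) + \sum_{i=1}^{\drp} g_i(y)\,\dot{\rp}^i_t .
\]
With this notation the backward equation for $Y^{s,x}$ becomes $dY^{s,x}_t = \hat f(t,Y^{s,x}_t,Z^{s,x}_t)\,dt - Z^{s,x}_t\,dW_t$ with $Y^{s,x}_T = h(X^{s,x}_T)$, i.e. a standard Markovian BSDE with no path term, coupled to the same forward SDE for $X^{s,x}$.

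The second step is to check that $\hat f$ satisfies the hypotheses of the cited theorem. For boundedness: $f$ is bounded, each $g_i$ is bounded, and $\sup_{t\in[0,T]}|\dot{\rp}_t|<\infty$ because $\rp$ is smooth on a compact interval, so $\hat f$ is bounded. For the Lipschitz property in $(y,z)$ uniformly in $t$: $f$ is Lipschitz with constant $C_f$, while $y \mapsto \sum_i g_i(y)\,\dot{\rp}^i_t$ is Lipschitz with constant at most $\drp \cdot \max_i\|Dg_i\|_\infty \cdot \sup_{t}|\dot{\rp}_t|$, uniformly in $t$. Finally $\hat f$ is continuous in $t$, inheriting this from $f$ and from the continuity of $\dot{\rp}$. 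Hence $\hat f$ meets every requirement of the Pardoux–Peng result.

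I would then invoke that result directly to conclude that $u(t,x):=Y^{t,x}_t$ is the unique $\BUC([0,T]\times\R^\dode)$ viscosity solution of
\[
  \partial_t u = \tfrac12 \Tr[\sigma(x)\sigma^T(x) D^2 u] + b(x)\cdot Du + \hat f(t,u,\sigma(x)^T Du), \qquad u|_T = h,
\]
and substitute back $\hat f(t,u,\sigma(x)^T Du) = f(t,u,\sigma(x)^T Du) + \sum_i g_i(u)\,\dot{\rp}^i_t = f(t,u,\sigma(x)^T Du) + g(u)\dot{\rp}_t$ to recover exactly the stated PDE.

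There is no genuine analytic obstacle here: the entire content is the reduction together with the cited theorem, so the only thing requiring care is confirming that absorbing $g(y)\dot{\rp}_t$ preserves the structural hypotheses — uniform-in-$t$ boundedness and Lipschitz continuity — that underlie both the existence part and the comparison principle giving uniqueness in $\BUC$. The one point worth flagging is the $t$-regularity of the effective driver $\hat f$, which is harmless here because $\rp$ is smooth; this is precisely the structure that fails in the rough regime and motivates defining the rough PDE solution as a limit of smooth approximations in Theorem \ref{thm:roughPDE}.
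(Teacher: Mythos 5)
Your reduction is correct and is exactly the argument the paper leaves implicit: Theorem \ref{thm:smoothPDE} is stated with only a citation to Pardoux--Peng and no proof, and absorbing the smooth term $g(y)\dot{\rp}_t$ into the driver $\hat f(t,y,z)=f(t,y,z)+\sum_i g_i(y)\dot{\rp}^i_t$, then checking boundedness and uniform-in-$t$ Lipschitz continuity of $\hat f$, is precisely what is needed to apply that citation. Your closing remark that this reduction breaks down for non-smooth $\rp$ correctly identifies why the paper must define the rough PDE solution as a limit of smooth approximations.
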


The following theorem extends this representation property to BSDEs with Young drift.
\begin{theorem}
  \label{thm:roughPDE}
  Let $\rp \in C^{0,\qvar}$, $q \in [1,2)$ and let $\rp^n$ smooth be given such that $\rp^n \to \rp$ in $C^{0,\qvar}$.
  Let $f(t,y,z): [0,T] \times \R \times \R^\dode \to \R$
  bounded and Lipschitz in $y,z$ uniformly in $t,x$,
  $\sigma: \R^\dode \to L(\R^\dbm, \R^\dode)$ Lipschitz,
  $b: \R^\dode \to \R^\dode$ Lipschitz
  and $g_1, \dots, g_\drp \in C^2_b(\R)$.%

  Let $u^n$ be the unique $\BUC([0,T]\times\R^\dode)$ viscosity solution to
  \begin{align*}
    \partial_t u^n = \frac{1}{2} \Tr[ \sigma(x) \sigma^T(x) D^2 u^n ] + b(x) \cdot Du^n + f(t,u, \sigma(x)^T Du) + g(u) \dot{\rp}^n_t.
  \end{align*}
  Then there exists $u \in \BC([0,T]\times\R^\dode)$ such that $u^n \to u$ locally uniformly
  and the limit does not depend on the approximating sequence.
  Formally, $u$ solves the PDE
  \begin{align*}
    \partial_t u = \frac{1}{2} \Tr[ \sigma(x) \sigma^T(x) D^2 u^n ] + b(x) \cdot Du + f(t,u,\sigma(x)^T Du) + g(u) \dot{\rp}_t.
  \end{align*}
  Moreover $u(t,x) = Y^{t,x}_t$, where
  $X^{s,x}$ is the solution to the SDE
  \begin{align*}
    dX^{s,x}_t = \sigma(X^{s,x}_t) dW_t + b(X^{s,x}_t) dt \qquad X^{s,x}_s = x
  \end{align*}
  and $Y^{s,x}$ the solution to the BSDE with Young drift
  \begin{align*}
    dY^{s,x}_t = ft, Y^{s,x}_t, Z^{s,x}_t) dt + g(Y^{s,x}_t) d\rp_t - Z^{s,x}_t dW_t \qquad Y^{s,x}_T = h(X^{s,x}_T).
  \end{align*}
\end{theorem}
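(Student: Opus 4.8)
The plan is to combine the smooth ``nonlinear Feynman--Kac'' representation of Theorem~\ref{thm:smoothPDE} with the well-posedness and continuity results for the Young BSDE established in Theorem~\ref{thm-mainResult}. Let $h \in \BUC(\R^\dode)$ denote the terminal datum (as in Theorem~\ref{thm:smoothPDE}) and, for each starting point $(s,x)$, let $X^{s,x}$ be the classical solution of the forward SDE. Since each $\rp^n$ is smooth, Theorem~\ref{thm:smoothPDE} identifies $u^n(t,x) = Y^{n,t,x}_t$, where $(Y^{n,t,x}, Z^{n,t,x})$ solves, on $[t,T]$, the BSDE with driver $f$, Young drift $g$, driving path $\rp^n$ and terminal condition $h(X^{t,x}_T)$. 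I would then \emph{define} the candidate limit by $u(t,x) := Y^{t,x}_t$, where $(Y^{t,x}, Z^{t,x}) \in \bsdespace \times \bsdespacez$ is the unique solution of the corresponding Young BSDE driven by $\rp$, whose existence is guaranteed by Theorem~\ref{thm-mainResult}(i). Because this solution is defined intrinsically (the $d\rp$ integral being a genuine Young integral), $u$ makes no reference to any approximating sequence, which immediately yields the claimed independence of the limit on the choice of $\rp^n$.

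The first substantive step is the convergence $u^n \to u$. Here I would invoke Theorem~\ref{thm-mainResult}(iii): the terminal conditions $h(X^{t,x}_T)$ all lie in the fixed $L^\infty$-ball of radius $\|h\|_\infty$, and the paths $\rp^n, \rp$ all lie in a fixed $q$-variation ball, so local uniform continuity of the solution map (applied with the terminal datum held fixed and only the path varying) gives $\bsdenorm{Y^{n,t,x} - Y^{t,x}} \to 0$ as $n \to \infty$. Since $|Y^{n,t,x}_t - Y^{t,x}_t| \le \bsdenorm{Y^{n,t,x} - Y^{t,x}}$ (the $\pvar$-norm dominating the supremum in time, as shown inside the proof of Theorem~\ref{thm-mainResult}), this produces pointwise convergence $u^n(t,x) \to u(t,x)$.

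The heart of the argument, and the reason Theorem~\ref{thm-mainResult}(iv) is needed, is the regularity of the limit and the upgrade to \emph{locally uniform} convergence. The continuity in the terminal condition provided by part (iii) is with respect to the $L^\infty$-norm of $\xi$, which is too strong to be fed by the forward flow: the map $x \mapsto h(X^{t,x}_T)$ is only continuous in $L^2(\Omega)$ (by continuity of the SDE flow together with dominated convergence for the bounded continuous $h$), not in $L^\infty(\Omega)$. Part (iv), by contrast, furnishes on the interval $[t,T]$ the estimate
\begin{align*}
  |u^n(t,x) - u^n(t,x')|
  =
  |Y^{n,t,x}_t - Y^{n,t,x'}_t|
  \le
  C(M)\, \E\big[\, |h(X^{t,x}_T) - h(X^{t,x'}_T)|^2 \,\big]^{1/2},
\end{align*}
where $C(M)$ depends only on the common $q$-variation bound $M$ of the $\rp^n$ and hence is \emph{independent of $n$}. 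This gives a modulus of continuity in $x$ uniform in $n$; combined with the analogous time-regularity estimate for the coupled forward--backward system, it yields equicontinuity of $\{u^n\}$ on compacts. Equicontinuity plus the pointwise convergence of the previous step upgrades the convergence to locally uniform via Arzel\`a--Ascoli, shows that the estimate passes to the limit so that $u \in \BC$, and, applied to $u$ directly, also records the Feynman--Kac identity $u(t,x) = Y^{t,x}_t$; the ``formal'' PDE is then solved by $u$ by construction, as the locally uniform limit of the $u^n$.

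I expect the main obstacle to be the uniform-in-$n$ regularity in the \emph{time} variable, together with the fact that parts (iii)--(iv) are stated on the fixed interval $[0,T]$ whereas the representation requires BSDEs on every subinterval $[t,T]$. One must verify that the constants there can be taken uniform in the interval length and that the local $q$-variation $\|\rp^n\|_{\qvar;[t,t']}$ is controlled uniformly in $n$ as $t' \to t$ --- which follows from $\rp^n \to \rp$ in $C^{0,\qvar}$ and the vanishing small-scale $q$-variation of paths in that space. Granting this, the remaining assertions (boundedness of $u$, and sequence-independence) are immediate from the construction.
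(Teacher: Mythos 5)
Your proposal is correct and follows essentially the same route as the paper: identify $u^n(t,x)=Y^{n,t,x}_t$ via Theorem \ref{thm:smoothPDE}, get pointwise convergence from the continuity of the Young-BSDE solution map, and obtain spatial equicontinuity uniformly in $n$ from Theorem \ref{thm-mainResult}(iv) combined with the $L^2$-Lipschitz dependence of $X^{s,x}_T$ on $x$, upgrading to locally uniform convergence. The time-equicontinuity step you flag as the main remaining work is handled in the paper exactly as you sketch, by splitting $Y^{n,s+\delta,x}_{s+\delta}-Y^{n,s,x}_s$ via the flow property and using the vanishing of $\|\rp\|_{\qvar;[s,s+\delta]}$ as $\delta\to 0$.
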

\begin{proof}
  By Theorem \ref{thm:smoothPDE} we can write $u^n(t,x) = Y^{n,t,x}_t$ where
  \begin{align*}
    dX^{s,x}_t &= \sigma(X^{s,x}_t) dW_t + b(X^{s,x}_t) dt, \ X^{s,x}_s = x
  \end{align*}
  and $Y^{n,s,x}$ is the solution to the BSDE
  \begin{align*}
    dY^{n,s,x}_t &= f(Y^{n,s,x}_t, Z^{n,s,x}_t) dt + g(Y^{n,s,x}_t) d\rp^n_t - Z^{n,s,x}_t dW_t, \ Y^{n,s,x}_T = h(X^{s,x}_T).
  \end{align*}

  By Theorem \ref{thm-mainResult} we have
  that for fixed $s,x$, $Y^{n,s,x} \to Y^{s,x}$ in $\bsdespace$, where
  $Y^{s,x}$ solves the corresponding BSDE with Young drift.
  In particular $Y^{n,s,x}_s \to Y^{s,x}_s$, and hence we get pointwise convergence of $u^n$.
  We now show that $u^n$ is locally uniformly continuous in $(t,x)$ uniformly in $n$.
  By Theorem \ref{thm-mainResult} (iv), uniformly in $n$,
  \begin{align*}
    |Y^{n,s,x}_s - Y^{n,s,x'}_s|
    &\le
    C \E[ |h(X^{s,x}_T) - h(X^{s,x'}_T)|^2 ]^{1/2} \\
    &\le
    C ||Dh||_\infty
    \E[ |X^{s,x}_T - X^{s,x'}_T|^2 ]^{1/2} \\
    &\lesssim
    |x-x'|,
  \end{align*}
  where we used Lipschitzness of the map $\R^\dode \ni x \mapsto X^{s,x}_T \in L^2(\Omega)$, see for example
  \cite[Theorem 2.2]{bib:stroock}.
  Moreover, for any $\delta>0$ small,
  \begin{align*}
    Y^{n,s+\delta,x}_{s+\delta}
    -
    Y^{n,s,x}_{s}
    &=
    \E[
      Y^{n,s+\delta,x}_{s+\delta}
      -
      Y^{n,s,x}_{s+\delta} ]
    +
    \E[
      Y^{n,s,x}_{s+\delta}
      -
      Y^{n,s,x}_{s} ] \\
    &=
    \E[
      Y^{n,s+\delta,x}_{s+\delta}
      -
      Y^{n,s+\delta,X^{s,x}_{s+\delta}}_{s+\delta} ]
    +
    \E[
      Y^{n,s,x}_{s+\delta}
      -
      Y^{n,s,x}_{s} ] \\
    &\lesssim
    \E[ |x - X^{s,x}_{s+\delta}|^2 ]^{1/2}
    +
    \E[ \int_s^{s+\delta} f(Y^{n,s,x}_r, Z^{n,s,x}_r) dr + \int_s^{s+\delta} g(Y^{n,s,x}_r) d\rp^n_r ] \\
    &\lesssim
    \delta^{1/2}
    +
    \delta ||f||_\infty
    +
    ||\rp||_{\qvar;[s,s+\delta]}
    \E[ \left( 1 + ||Y^{n,s,x}||_{\pvar;[s,s+\delta]} \right) ] \\
    &\lesssim
    \delta^{1/2}
    +
    \delta ||f||_\infty
    +
    ||\rp||_{\qvar;[s,s+\delta]},
  \end{align*}
  where we used the uniform boundedness of $\bsdenorm{Y^n}$ in the last step (as in the proof of Theorem \ref{thm-mainResult}).

  It follows that $u^n$ is locally uniformly continuous in $(t,x)$ uniformly in $n$.
  Hence $u^n$ converges to $u$ locally uniformly.
\end{proof}

\begin{remark}
  In the vain of \cite{bib:diehlFrizStannat} one can also,
  under appropriate assumptions on the coefficients, verify that $u$ solves an integral equation.
\end{remark}


%

\section{Appendix - Young integration}
\label{sec:youngIntegration}

For $p\ge 1$, $V$ some Banach space, we denote by $C^\pvar = C^\pvar([0,T],V)$
the space of $V$-valued continuous paths $X$ with finite $p$-variation
\begin{align*}
  ||X||_\pvar
  := ||X||_{\pvar;[0,T]}
  := \left( \sup_\pi \sum_{[u,v] \in \pi} |X_{u,v}|^p \right)^{1/p}.
\end{align*}
Here the supremum runs over all partitions of the interval $[0,T]$
and $X_{u,v} := X_v - X_u$.

We shall also need the space $C^{0,\pvar} = C^{0,\pvar}([0,T], V)$,
defined as the closure of $C^\infty([0,T], V)$ under the norm $||\cdot||_\pvar$.
Obviously $C^{0,\pvar} \subset C^\pvar$, and the inclusion is strict \cite[Section 5.3.3]{bib:frizVictoir}.

The following basic estimates can be found in \cite[Chapter 5]{bib:frizVictoir}
\begin{align*}
  ||Y||_\infty &\le |Y_T| + ||Y||_\pvar, \qquad \forall p \ge 1 \\
  ||Y||_\pvar &\le ||Y||_\qvar, \qquad \forall 1 \le q \le p.
\end{align*}

The proof of the following result goes back
to \cite{bib:young}.
A short modern proof can be found in \cite[Chapter 4]{bib:frizHairer}.
In this statement and in what follows $a \lesssim b$,
means that there exists a constant $c > 0$, not depending on the
paths under considerations, such that $a \le c b$.
The constant $c$ can depend on the vector fields under considerations,
the dimension and the time horizion $T$, but is bounded for $T$ bounded.
\begin{theorem}[Young integration]
  \label{thm:youngIntegration}
  Let $X \in C^\pvar( [0,T], L(V,W) )$, $Y \in C^\qvar( [0,T], W )$
  with $1/p + 1/q > 1$.
  Then
  \begin{align*}
    \int_0^T X_s dY_s
    :=
    \lim_{|\pi| \to 0}
    \sum_{[u,v] \in \pi} X_u Y_{u,v}
  \end{align*}
  exists, where the limit is taken over partitions of $[0,T]$ with meshsize going to $0$.
  Moreover
  \begin{align*}
    || \int X_s dY_s ||_{\qvar;[0,T]}
    &\lesssim
    ( |X_0| + ||X||_{\pvar;[0,T]} ) ||Y||_{\qvar;[0,T]} \\
    &\lesssim
    ( |X_T| + ||X||_{\pvar;[0,T]} ) ||Y||_{\qvar;[0,T]}.
  \end{align*}
\end{theorem}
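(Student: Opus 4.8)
The plan is to build the integral by the classical ``sewing'' (point-dropping) argument, whose engine is the strict inequality $\theta := 1/p + 1/q > 1$. Recall that a control is a continuous superadditive function $\omega(s,t) \ge 0$ vanishing on the diagonal; for our paths $\omega_X(s,t) := ||X||_{\pvar;[s,t]}^p$ and $\omega_Y(s,t) := ||Y||_{\qvar;[s,t]}^q$ are controls, and so is $\omega := \omega_X + \omega_Y$. Writing $X_{u,v} := X_v - X_u$ and introducing the germ $\Xi_{u,v} := X_u Y_{u,v}$, a direct computation using $Y_{u,v} = Y_{u,w} + Y_{w,v}$ gives the exact identity
\[
  \Xi_{u,v} - \Xi_{u,w} - \Xi_{w,v} = - X_{u,w} Y_{w,v},
\]
whence $|\Xi_{u,v} - \Xi_{u,w} - \Xi_{w,v}| \le \omega_X(u,w)^{1/p}\,\omega_Y(w,v)^{1/q} \le \omega(u,v)^{\theta}$. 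The superlinearity of the right-hand side in $\omega$ is precisely what drives the whole construction.

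The heart of the matter, and the step I expect to be the main obstacle, is the uniform \emph{local estimate}: for every partition $\pi$ of an interval $[s,t]$, the Riemann sum $S(\pi) := \sum_{[u,v] \in \pi} X_u Y_{u,v}$ satisfies
\[
  |S(\pi) - X_s Y_{s,t}| \le 2^\theta \zeta(\theta)\, \omega(s,t)^{\theta}.
\]
To prove it I remove interior points one at a time. If $\pi$ has $M$ subintervals, then splitting the interior points into those with even and odd index and using superadditivity twice gives $\sum_i \omega(t_{i-1}, t_{i+1}) \le 2\,\omega(s,t)$, so some interior point $t_i$ has $\omega(t_{i-1},t_{i+1}) \le \tfrac{2}{M-1}\omega(s,t)$; removing it changes $S$ by exactly $-X_{t_{i-1},t_i} Y_{t_i,t_{i+1}}$, hence by at most $\big(\tfrac{2}{M-1}\big)^{\theta}\omega(s,t)^{\theta}$. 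Iterating from $M$ subintervals down to the single germ and summing the convergent series $\sum_{j \ge 1} j^{-\theta} = \zeta(\theta)$ yields the displayed bound. A one-line homogeneity argument then upgrades it to product form: since $D_{s,t} := (\lim_\pi S(\pi)) - X_s Y_{s,t}$ is bilinear in $(X,Y)$ (linear in $Y$, and in $X$ through its increments only), replacing $(X,Y)$ by $(\mu X, \lambda Y)$ in the bound and optimising over $\mu,\lambda > 0$ gives
\[
  |D_{s,t}| \le C_{p,q}\, ||X||_{\pvar;[s,t]}\, ||Y||_{\qvar;[s,t]}.
\]

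With the local estimate in hand, existence of the limit follows by a Cauchy argument. Given two partitions I pass to their common refinement $\pi \vee \pi'$; on each interval $[t_{i-1},t_i]$ of $\pi$ the local estimate bounds the discrepancy between the refined partial sum and the germ $X_{t_{i-1}} Y_{t_{i-1},t_i}$ by $2^\theta\zeta(\theta)\,\omega(t_{i-1},t_i)^{\theta}$, and summing over $i$ together with $\sum_i \omega(t_{i-1},t_i)^{\theta} \le \big(\max_i \omega(t_{i-1},t_i)\big)^{\theta - 1}\omega(s,t)$ controls $|S(\pi \vee \pi') - S(\pi)|$. As $\omega$ is continuous on the compact simplex it is uniformly continuous, so $\max_i \omega(t_{i-1},t_i) \to 0$ as $|\pi| \to 0$; the sums therefore form a Cauchy net, the limit $I_{s,t} := \int_s^t X\,dY$ exists independently of the approximating sequence, and $I$ is additive in its endpoints by construction.

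Finally the stated variation bound is read off from the local estimate. For any $u < v$ one has $|I_{u,v}| \le |X_u|\,|Y_{u,v}| + C_{p,q} ||X||_{\pvar;[u,v]}||Y||_{\qvar;[u,v]}$, so for any partition of $[0,T]$,
\[
  \Big(\sum_{[u,v]} |I_{u,v}|^q\Big)^{1/q} \le ||X||_{\infty} ||Y||_{\qvar;[0,T]} + C_{p,q} ||X||_{\pvar;[0,T]} ||Y||_{\qvar;[0,T]},
\]
where I used $||X||_{\pvar;[u,v]} \le ||X||_{\pvar;[0,T]}$ together with superadditivity of $\omega_Y$ in the second term, and a crude bound on the first. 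Taking the supremum over partitions and inserting the elementary inequality $||X||_{\infty} \le |X_0| + ||X||_{\pvar;[0,T]}$ recorded above gives the first asserted estimate, while $||X||_{\infty} \le |X_T| + ||X||_{\pvar;[0,T]}$ gives the second.
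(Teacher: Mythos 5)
The paper does not actually prove this statement --- it is recalled in the appendix with pointers to Young's original article and to \cite[Chapter 4]{bib:frizHairer} --- and your argument is precisely the standard sewing/point-dropping proof given in those references: the germ $X_u Y_{u,v}$, the coboundary identity $\Xi_{u,v}-\Xi_{u,w}-\Xi_{w,v}=-X_{u,w}Y_{w,v}$, the maximal inequality $|S(\pi)-X_sY_{s,t}|\le 2^\theta\zeta(\theta)\,\omega(s,t)^\theta$ obtained by deleting one well-chosen interior point at a time, the upgrade to the product bound by bilinear homogeneity, and the Cauchy-net and $q$-variation estimates at the end, all of which are correct. The one ingredient you invoke without justification is that $(s,t)\mapsto \|X\|_{\pvar;[s,t]}^p$ is a \emph{continuous} control for a continuous path of finite $p$-variation (you need this so that $\max_i \omega(t_{i-1},t_i)\to 0$ with the mesh); this is true but not completely obvious, see \cite[Proposition 5.8]{bib:frizVictoir}.
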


We also need
\begin{lemma}
  \label{lem:differencePvar}
  Let $p\ge 1$, $g \in C^2_b$, $a, a' \in C^\pvar$, then
  \begin{align*}
    ||g(a) - g(a')||_\pvar \le c ||a - a'||_\pvar + \left( ||a||_\pvar + ||a'||_\pvar \right) ||a-a'||_\infty.
  \end{align*}
\end{lemma}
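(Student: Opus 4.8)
The plan is to reduce the statement to the elementary product estimate for $p$-variation norms, namely Lemma \ref{lem:pvarProduct}, $\|fh\|_\pvar \le \|f\|_\infty \|h\|_\pvar + \|f\|_\pvar \|h\|_\infty$ (the same estimate already invoked in the proof of Theorem \ref{thm-mainResult}). First I would linearize the difference $g(a)-g(a')$ by the fundamental theorem of calculus: setting
\[
  \phi_t := \int_0^1 g'\!\left( \theta a_t + (1-\theta) a'_t \right) d\theta,
\]
one has $g(a_t) - g(a'_t) = \phi_t\,(a_t - a'_t)$ for every $t$, so that $g(a)-g(a') = \phi\cdot(a-a')$ as a product of paths.

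Second, I would apply the product estimate with $f = \phi$ and $h = a-a'$, obtaining
\[
  \|g(a)-g(a')\|_\pvar \le \|\phi\|_\infty\,\|a-a'\|_\pvar + \|\phi\|_\pvar\,\|a-a'\|_\infty .
\]
The first factor is immediate: since $g \in C^2_b$, we have $\|\phi\|_\infty \le \|g'\|_\infty \le c$, which already produces the term $c\,\|a-a'\|_\pvar$.

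Third, I would bound $\|\phi\|_\pvar$. For any $u<v$,
\[
  \phi_{u,v} = \int_0^1 \left[ g'\!\left(\theta a_v + (1-\theta)a'_v\right) - g'\!\left(\theta a_u + (1-\theta)a'_u\right) \right] d\theta ,
\]
and since $g'$ is Lipschitz with constant $\|g''\|_\infty$ (again because $g \in C^2_b$), the integrand is dominated by $\|g''\|_\infty\left(|a_{u,v}| + |a'_{u,v}|\right)$ uniformly in $\theta$. Hence $|\phi_{u,v}| \le \|g''\|_\infty\left(|a_{u,v}| + |a'_{u,v}|\right)$, and summing over a partition and taking $p$-th roots, the triangle inequality for the $\ell^p$ norm of the increment sequence (Minkowski) yields
\[
  \|\phi\|_\pvar \le \|g''\|_\infty\left( \|a\|_\pvar + \|a'\|_\pvar \right).
\]
Feeding the two bounds back into the product estimate gives precisely the claimed inequality, after absorbing $\|g'\|_\infty$ and $\|g''\|_\infty$ into the constant $c$.

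I do not expect a genuine obstacle here. The only points requiring mild care are the passage from the pointwise increment bound $|\phi_{u,v}| \lesssim |a_{u,v}| + |a'_{u,v}|$ to the $p$-variation bound on $\phi$ (where Minkowski is used and where one simultaneously verifies that $\phi$ has finite $p$-variation, so that the product estimate legitimately applies), and the observation that the continuity of $a,a'$ together with $g \in C^2_b$ guarantees $\phi$ is itself a continuous path.
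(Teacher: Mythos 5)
Your proof is correct and follows essentially the same route as the paper's: both linearize $g(a_t)-g(a'_t)=\phi_t\,(a_t-a'_t)$ via the integral of $Dg$ along the segment, bound the increments of $\phi$ using the Lipschitz constant $\|D^2g\|_\infty$ of $Dg$, and then estimate the increment of the product; the only difference is that you route the last step through Lemma \ref{lem:pvarProduct} while the paper expands the product increment by hand, which is the same computation. No gaps.
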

\begin{proof}
  This follows from
  \begin{align*}
    &|
    g(a_t) - g(a'_t)
    -
    g(a_s) - g(a'_s)
    | \\
    &\quad=
    |
    \int_0^1 Dg( a'_t + \theta (a_t - a'_t) ) d\theta (a_t - a'_t)
    -
    \int_0^1 Dg( a'_s + \theta (a_s - a'_s) ) d\theta (a_s - a'_s)
    | \\
    &\quad\le
    |
    \int_0^1 Dg( a'_t + \theta (a_t - a'_t) ) 
            -   Dg( a'_s + \theta (a_s - a'_s) )
             d\theta (a_t - a'_t)| \\
    &\qquad
    +
    |\int_0^1 Dg( a'_s + \theta (a_s - a'_s) ) d\theta (a_t - a'_t) - (a_s - a'_s)| \\
    &\quad\le
    ||D^2 g||_\infty
    \left( |a'_t - a'_s| + |a_t - a_s| \right) |a_t - a'_t|
    +
    ||Dg||_\infty |(a_t - a'_t) - (a_s - a'_s)|.
  \end{align*}
\end{proof}

\begin{lemma}
  \label{lem:pvarProduct}
  Let $p\ge 1$ and $a, b \in C^\pvar$ then
  \begin{align*}
    ||a b||_\pvar
    \lesssim
    ||a||_\pvar ||b||_\infty
    +
    ||a||_\infty ||b||_\pvar
  \end{align*}
\end{lemma}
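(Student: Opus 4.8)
The plan is to reduce the statement to the discrete Leibniz rule for increments combined with the power-mean inequality; no analytic machinery beyond the definition of the $p$-variation norm is needed. The starting point is the observation that for any $s \le t$ the increment of the product factorizes as
\begin{align*}
  (ab)_{s,t}
  =
  a_t b_t - a_s b_s
  =
  a_{s,t} b_t + a_s b_{s,t}.
\end{align*}
Taking absolute values and bounding $|b_t| \le ||b||_\infty$ and $|a_s| \le ||a||_\infty$ yields the pointwise increment estimate $|(ab)_{s,t}| \le ||b||_\infty |a_{s,t}| + ||a||_\infty |b_{s,t}|$, valid for every subinterval $[s,t]$.

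Next I would raise this to the $p$-th power and sum over an arbitrary partition $\pi$ of $[0,T]$. Using the convexity bound $(x+y)^p \le 2^{p-1}(x^p + y^p)$, valid for $p \ge 1$, this gives
\begin{align*}
  \sum_{[s,t] \in \pi} |(ab)_{s,t}|^p
  \le
  2^{p-1} \Big(
    ||b||_\infty^p \sum_{[s,t] \in \pi} |a_{s,t}|^p
    +
    ||a||_\infty^p \sum_{[s,t] \in \pi} |b_{s,t}|^p
  \Big),
\end{align*}
and each of the two inner sums is dominated by $||a||_\pvar^p$, respectively $||b||_\pvar^p$, by definition of the $p$-variation. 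The resulting bound $2^{p-1}\big(||b||_\infty^p ||a||_\pvar^p + ||a||_\infty^p ||b||_\pvar^p\big)$ is uniform in $\pi$.

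Finally I would take the supremum over all partitions, then the $p$-th root, and invoke subadditivity of $x \mapsto x^{1/p}$ (which holds since $1/p \le 1$) to separate the two terms, obtaining
\begin{align*}
  ||ab||_\pvar
  \le
  2^{(p-1)/p} \left( ||a||_\pvar ||b||_\infty + ||a||_\infty ||b||_\pvar \right).
\end{align*}
As $2^{(p-1)/p} \le 2$, this is precisely the asserted $\lesssim$ estimate. I do not expect any genuine obstacle here: the only points requiring a little care are the correct side of the Leibniz split (placing $b_t$ next to $a_{s,t}$, which is harmless since it is controlled by the sup norm in either case) and the bookkeeping of the dimensional constants, all of which are absorbed into $\lesssim$.
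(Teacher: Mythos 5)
Your argument is correct and is essentially the paper's own proof: the paper simply records the pointwise increment bound $|a_t b_t - a_s b_s| \le |a_t - a_s|\,\|b\|_\infty + \|a\|_\infty\,|b_t - b_s|$ and leaves the summation over partitions implicit, which is exactly the step you carry out with the $2^{p-1}$ convexity bound and the subadditivity of the $p$-th root. Nothing further is needed.
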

\begin{proof}
  This follows from
  \begin{align*}
    |a_t b_t - a_s b_s|
    \le
    |a_t - a_s| ||b||_\infty
    +
    ||a||_\infty |b_t - b_s|.
  \end{align*}
\end{proof}


\begin{thebibliography}{1} 

  \bibitem[Bis73]{bib:bismut73} 
    Bismut, Jean-Michel. "Conjugate Convex Functions in Optimal Stochastic Control",  J.Math. Anal. Apl. 44 (1973): 384-404.

  \bibitem[FV10]{bib:frizVictoir} 
    Friz, Peter K., and Nicolas B. Victoir. Multidimensional stochastic processes
    as rough paths: theory and applications. Vol. 120. Cambridge University
    Press, 2010.

  \bibitem[FH14]{bib:frizHairer}
    Friz, Peter, and Martin Hairer. "A course on rough paths." Springer (2014).

  \bibitem[You36]{bib:young}
    L. C. Young. An inequality of the H\"oder type,
    connected with Stieltjes integration. Acta Math. 67, no. 1, (1936), 251–282.

  \bibitem[GN08]{bib:guerraNualart}
    Guerra, João, and David Nualart. "Stochastic differential equations driven
    by fractional Brownian motion and standard Brownian motion." Stochastic
    Analysis and Applications 26.5 (2008): 1053-1075.

  \bibitem[Die12]{bib:diehlPhd}
    Diehl, Joscha. Topics in stochastic differential equations and rough path theory, PhD thesis.

  \bibitem[CDF13]{bib:crisanDiehlFrizOberhauser}
    Crisan, Dan, et al. "Robust filtering: correlated noise and
    multidimensional observation." The Annals of Applied Probability 23.5
    (2013): 2139-2160.

  \bibitem[DOR15]{bib:diehlOberhauserRiedel} 
    Diehl, Joscha, Harald Oberhauser, and Sebastian Riedel. "A Levy area
    between Brownian motion and rough paths with applications to robust
    nonlinear filtering and rough partial differential equations." Stochastic
    Processes and their Applications 125.1 (2015): 161-181.

  \bibitem[DFS14]{bib:diehlFrizStannat}
    Diehl, Joscha, Peter K. Friz, and Wilhelm Stannat. "Stochastic partial
    differential equations: a rough path view." arXiv preprint arXiv:1412.6557
    (2014).

  \bibitem[DF12]{bib:diehlFriz}
    Diehl, Joscha, and Peter Friz. "Backward stochastic differential equations
    with rough drivers." The Annals of Probability 40.4 (2012): 1715-1758.

  \bibitem[EPQ97]{bib:ElKarouiPengQuenz}
   El Karoui, Nicole, Shige Peng, and Marie-Claire Quenez.  "Backward stochastic differential equations in finance."  Math. Finance 7.1 (1997): 1-71.
 
  \bibitem[PP90]{bib:pardouxPeng}
    Pardoux, Etienne, and Shige Peng. "Adapted solution of a backward
    stochastic differential equation." Systems \& Control Letters 14.1 (1990):
    55-61.

  \bibitem[PP92]{bib:pardouxPengPDEs}
    Pardoux, Etienne, and Shige Peng. "Backward stochastic differential
    equations and quasilinear parabolic partial differential equations."
    Stochastic partial differential equations and their applications. Springer
    Berlin Heidelberg, 1992. 200-217.

  \bibitem[Str82]{bib:stroock}
    Stroock, Daniel W., and Satyajit Karmakar. Lectures on topics in stochastic differential equations. Springer, 1982.
\end{thebibliography}
\end{document}